\numberwithin{equation}{section}
\theoremstyle{definition}
\newtheorem{defn}{Definition}[section]
\newtheorem{rem}[defn]{Remark}
\theoremstyle{plain}
\newtheorem{prop}[defn]{Proposition}
\newtheorem{thm}[defn]{Theorem}
\newtheorem{cor}[defn]{Corollary}
\newtheorem*{mainthm}{Main Theorem}
\title{Connection formula for the Jackson integral of Riemann-Papperitz type
}
\author{
	Taikei Fujii\footnote{Department of Mathematics, Graduate School of Science, Kobe University, Rokko, Kobe 657-8501, Japan.
		E-mail:{ tfujii@math.kobe-u.ac.jp}}\ \ 
	and  
	Takahiko Nobukawa
	\footnote{Department of Mathematics, Graduate School of Science, Kobe University, Rokko, Kobe 657-8501, Japan.
		E-mail: tnobukw@math.kobe-u.ac.jp\\\ \\
		keywords: \\
		Jackson integral, $q$-hypergeometric equation, 
		connection problem, very-well-poised $q$-hypergeometric series.\\
		MSC2020: 33D60, 33D70, 33D15, 39A13.}
}
\date{}
\begin{document}
	\maketitle
	\begin{abstract}
		We give a connection formula for the Jackson integral of Riemann-Papperitz type.
		This includes a solution of the connection problem for the variant of $q$-hypergeometric equation of degree three introduced by Hatano-Matsunawa-Sato-Takemura.
		Using this formula we show a linear relation for the Kajihara's  $q$-hypergeometric series $W^{M,2}$, whose $q$-difference equation is recently obtained by one of the author. 
	\end{abstract}
	\section{Introduction}
The Heine's $q$-hypergeometric equation  
\begin{align}\label{Int: qHGEq}
	[x(1-aT_{x})(1-bT_{x})-(1-T_{x})(1-cq^{-1}T_{x})]f(x)=0,
\end{align}
has the integral solutions
\begin{align}\label{Int: HISol}
&\int_{\tau_1}^{\tau_2}t^{\alpha - 1}\frac{(qt, b x t)_{\infty}}{(c t/a, x t)_{\infty}}d_qt, \quad \tau_1, \tau_2 \in \{0, 1, q/b x \}, \\
\label{Int: HISol2}
&x^{-\beta}\int_{\sigma_1}^{\sigma_2}s^{\beta  - \gamma }\frac{(qas/c, qs/x)_{\infty}}{(s, q s/b x )_{\infty}}d_qs, \quad \sigma_1,  \sigma_2 \in \{0, x, c/a\}.
\end{align}
where $a = q^{\alpha}, b = q^{\beta}, c = q^{\gamma}$ and $T_xf(x) = f(x)$.
The other notations in \eqref{Int: HISol}, \eqref{Int: HISol2} are given in section \ref{secpre}.
The integral solutions \eqref{Int: HISol}, \eqref{Int: HISol2} satisfy the following linear relations (see \cite{Mi1989}, for example): 
\begin{align}\label{Int: qCPIS}
\int_0^1t^{\alpha - 1}\frac{(qt, bxt)_{\infty}}{(ct/a, xt)_{\infty}}d_qt
&=
\left(\frac{b}{q}\right)^{\alpha}
\frac{\theta(b)}{\theta(b/a)}x^{\alpha}\frac{\theta(ax)}{\theta(x)}
\times
\int_0^{q/bx}t^{\alpha - 1}\frac{(qt, bxt)_{\infty}}{(ct/a, xt)_{\infty}}d_qt
\nonumber
\\
&+
\left(\frac{a}{c}\right)^{\beta - \gamma +1}
\frac{\theta(c/b)}{\theta(a/b)}x^{\beta}\frac{\theta(bx)}{\theta(x) }
\times
x^{-\beta}
\int_0^{c/a}s^{\beta- \gamma}\frac{(qas/c, qs/x)_{\infty}}{(s, q s/b x )_{\infty}}d_qs.
\end{align}
Such a connection problem is an important and fundamental problem for the theory of hypergeometric functions.
There are many studies on connection problems for various $q$-difference equations, and the works relevant for us are \cite{It,IN,Mi1989,N,Wat}.

In \cite{HMST} , an extension of the Heine's equation was introduced as follows: 
		\begin{align}\label{Int: TakemuraH3}
			\mathcal{H}_{3}&y=0,\\
			\notag \mathcal{H}_{3}&=\prod_{i=1}^{3}(x-q^{h_{i}+1/2}t_{i})\cdot T_{x}^{-1}+q^{2\alpha+1}\prod_{i=1}^{3}(x-q^{l_{i}-1/2}t_{i})\cdot T_{x}\\
			\notag &+q^{\alpha}\biggl[-(q+1)x^{3}+q^{1/2}\sum_{i=1}^{3}(q^{h_{i}}+q^{l_{i}})t_{i}x^{2}\\
			\notag &\phantom{+q^{\alpha}}-q^{(h_{1}+h_{2}+h_{3}+l_{1}+l_{2}+l_{3}+1)/2}t_{1}t_{2}t_{3}\sum_{i=1}^{3}((q^{-h_{i}}+q^{-l_{i}})/t_{i})x\\
			&\phantom{+q^{\alpha}}+q^{(h_{1}+h_{2}+h_{3}+l_{1}+l_{2}+l_{3})/2}(q+1)t_{1}t_{2}t_{3}\biggr].
		\end{align}
The equation (\ref{Int: TakemuraH3}) is called the variant of  $q$-hypergeometric equation of degree three.
It is shown in  \cite{AT,FN} that the equation  (\ref{Int: TakemuraH3}) has the integral solution of the form
\begin{align}\label{integrandJP3}
\int_{\tau_1}^{\tau_2}\frac{(Axt)_{\infty}}{(Bxt)_{\infty}}\prod_{i=1}^3\frac{(a_i t)_{\infty}}{(b_i t)_{\infty}}d_qt, \quad \tau_1, \tau_2\in\{q/a_1,q/a_2,q/a_3,q/(Ax)\},
\end{align}
where
\begin{align}
A a_1 a_2 a_3 =q^2 B b_1 b_2 b_3.
\end{align}
The integral \eqref{integrandJP3} is a $q$-analog of the following integral: 
\begin{align}\label{Int: RPI}
\int_C (1-t_1 t)^{\nu_1}(1-t_2 t)^{\nu_2}(1-t_3 t)^{\nu_3}(1-xt)^{\nu_4}dt,\quad \nu_1+\nu_2+\nu_3+\nu_4=-2.
\end{align}
The Riemann-Papperitz's  equation is a Fuchsian differential equation of second order with three singularities $\{ t_1, t_2, t_3\}$ on the Riemann sphere $\mathbb{C}\cup \{\infty \}$, and has integral solution of the form \eqref{Int: RPI}.
So we call the integral \eqref{integrandJP3} the Jackson integral of Riemann-Papperitz type in this paper. 
For the details of the Riemann-Papperitz's differential equation, see \cite{WW}.

In this paper, generalizing the integral \eqref{integrandJP3},  we will consider connection formulas for the following integrals for $M\geq 1$:
\begin{align}\label{integrandJP}
\int_{q/a_i}^{q/a_j}\prod_{k=1}^{M+3}\frac{(a_k t)_{\infty}}{(b_k t)_{\infty}}d_{q}t, \quad a_1 a_2 \cdots a_{M+3} =q^2 b_1 b_2 \cdots b_{M+3}.
\end{align}
We also call this integral the Jackson integral of Riemann-Papperitz type.
When $M = 1$, these formulas give a solution of the connection problem for the  equation \eqref{Int: TakemuraH3}.
Our main result is as follows:
\begin{mainthm}\label{Int: thmconnint}
Suppose $a_1\cdots a_{M+3}=q^2 b_1 \cdots b_{M+3}$.
We have
\begin{align}\label{Int: connvarM}
	\sum_{k=2}^{M+3} \tilde{C}_k \int_{q/a_1}^{q/a_k} \prod_{i=1}^{M+3}\frac{(a_i t)_\infty}{(b_i t)_\infty} d_q t=0.
\end{align}
Here 
\begin{align}
	\tilde{C}_k=\left(\frac{a_k}{a_1}\right)^2\prod_{i=1}^{M+3}\frac{\theta(a_k/b_i)}{\theta(a_1/b_i)}\prod_{\substack{1\leq i\leq M+3\\i\neq 1}}\theta(a_1/a_i)\prod_{\substack{1\leq i\leq M+3\\i\neq k}}\theta(a_k/a_i)^{-1}.
\end{align}
\end{mainthm}

Recently it is shown that the integral \eqref{integrandJP} is expressed by the Kajihara's $q$-hypergeometric series \cite{Nkaji}.
Therefore, our main result \eqref{Int: connvarM} gives a new linear relation for the Kajihara's $q$-hypergeometric series.
For the background and details for this series, see \cite{Kaji,kaji2018,KN}.

The contents of this paper are as follows. 
In section \ref{secpre}, we define the notations.
In section \ref{Sec con}, we derive a connection formula among the Jackson integrals of Riemann-Papperitz type \eqref{integrandJP}. 
In section \ref{Secsumdis}, we summarize  the results and discuss  related problems.  

\section{Preliminaries}\label{secpre}
	Throughout this paper, we fix $q\in\mathbb{C}$ with $0<|q|<1$, and use the following notations:
	\begin{align}
		&(a)_\infty=\prod_{i=0}^\infty(1-aq^i),\ (a)_l=\frac{(a)_\infty}{(aq^l)_\infty},\ (a_1,\ldots,a_r)_l=(a_1)_l\cdots (a_r)_l,\\
		&\theta(x)=(x,q/x)_\infty,\ \theta(x_1,\ldots,x_r)=\theta(x_1)\cdots \theta(x_r),\\
		&\int_0^\tau f(t)d_qt=(1-q)\sum_{n=0}^\infty f(\tau q^n)\tau q^n,\ 
		\int_0^{\tau\infty}f(t)d_qt=(1-q)\sum_{n=-\infty}^\infty f(\tau q^n)\tau q^n,\\
		&\int_{\tau_1}^{\tau_2}f(t)d_qt=\int_{0}^{\tau_2}f(t)d_qt-\int_{0}^{\tau_1}f(t)d_qt,\\
		&T_x f(x)=f(qx).
	\end{align}
	In the following, we suppose that the parameters satisfy 
	\begin{align}
		Aa_2a_3\cdots a_{M+3}=q^2 Bb_2b_3\cdots b_{M+3}.
	\end{align}
		For $M\in\mathbb{Z}_{>0}$, the following $q$-difference operator $E_M$ of rank $M+1$ is defined in \cite{Nkaji} as follows:
		\begin{align}
			\notag E_M&=x^{M+2} T_x^{-1}\prod_{i=0}^{M}(B-Aq^iT_x)\\
			\notag&+\sum_{k=1}^{M+1}(-1)^{k}x^{M+2-k}[e_{k}(a)T_{x}^{-1}-qe_{k}(b)]\prod_{i=0}^{M-k}(B-Aq^iT_{x})\prod_{i=0}^{k-2}(1-q^{-i}T_{x})\\
			\label{defequationEM}&+(-1)^M \frac{a_2\cdots a_{M+3}}{B}T_x^{-1}\prod_{i=0}^{M}(1-q^{-i}T_x).
		\end{align}
		Here, $a=(a_2,a_3,\ldots,a_{M+3})$, $b=(b_2,b_3,\ldots,b_{M+3})$ and $e_k$ is the elementary symmetric polynomial of degree $k$.
The following integral solution for $E_My=0$ is obtained in \cite{Nkaji}.
	\begin{thm}[\cite{Nkaji}]\label{propequationint}
		If $\tau\in\{q/(Ax),q/a_2,q/a_3,\ldots,q/a_{M+3}\}$, then the integral
		\begin{align}
			\int_0^\tau \frac{(Axt)_\infty}{(Bxt)_\infty}\prod_{k=2}^{M+3}\frac{(a_kt)_\infty}{(b_kt)_\infty}d_qt,
		\end{align}
		satisfies the non-homogeneous equation
		\begin{align}\label{E_My=0}
			E_M y=-\prod_{i=0}^{M-1}(B-Aq^i)\cdot q(1-q)x^{M+1}.
		\end{align}
		In particular, the Jackson integral of Riemann-Papperitz type
		\begin{align}\label{intRP}
			\varphi_{i,j}=\int_{q/a_i}^{q/a_j} \frac{(Axt)_\infty}{(Bxt)_\infty}\prod_{k=2}^{M+3}\frac{(a_kt)_\infty}{(b_kt)_\infty}d_qt,
		\end{align}
		satisfies the equation $E_My=0$.
		Here, $a_1=Ax$.
	\end{thm}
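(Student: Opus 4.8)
The plan is to move the operator $E_M$ inside the Jackson integral and exhibit the resulting integrand as a $q$-exact form in $t$, so that the $q$-analogue of the fundamental theorem of calculus reduces the integral to boundary values. Write $g(x,t)=\frac{(Axt)_\infty}{(Bxt)_\infty}\prod_{k=2}^{M+3}\frac{(a_kt)_\infty}{(b_kt)_\infty}$ and put $a_1=Ax$, $b_1=Bx$, so that $g(x,t)=\prod_{k=1}^{M+3}(a_kt)_\infty/(b_kt)_\infty$. Directly from the definition of $(\,\cdot\,)_\infty$ one reads off the basic shift relations
\[
\frac{g(x,qt)}{g(x,t)}=\prod_{k=1}^{M+3}\frac{1-b_kt}{1-a_kt},\qquad \frac{T_x^{\,j}g}{g}=\prod_{l=0}^{j-1}\frac{1-Bq^lxt}{1-Aq^lxt}\ \ (j\ge 0),\qquad \frac{T_x^{-1}g}{g}=\frac{q-Axt}{q-Bxt}.
\]
Because every monomial of $E_M$ is a polynomial in $x$ times $T_x^{\alpha}$ while the integration variable is $t$, the operator commutes with $\int_0^\tau\,d_qt$ whenever $\tau$ is independent of $x$; the remaining endpoint $\tau=q/(Ax)=q/a_1$ is first turned into the $x$-independent endpoint $1$ by the rescaling $t\mapsto (q/(Ax))u$, after which $a_1$ plays the same role as the other $a_k$ and the analysis below applies verbatim.

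The core of the argument is the identity
\[
(E_Mg)(x,t)=D_{q,t}\bigl[P(x,t)\,g(x,t)\bigr],\qquad D_{q,t}F(t):=\frac{F(t)-F(qt)}{(1-q)t},
\]
for a polynomial $P(x,t)$ in $t$. Dividing by $g(x,t)$, using the first shift relation, and clearing the denominator $\prod_{k=1}^{M+3}(1-a_kt)$, this is equivalent to the polynomial identity
\[
(1-q)\,t\,\frac{(E_Mg)(x,t)}{g(x,t)}\prod_{k=1}^{M+3}(1-a_kt)=P(x,t)\prod_{k=1}^{M+3}(1-a_kt)-P(x,qt)\prod_{k=1}^{M+3}(1-b_kt),
\]
and its verification is the main obstacle. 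Here the nested-product structure of $E_M$ is essential: substituting the shift formula for $T_x^{\,j}g/g$ into the factors $\prod_{i=0}^{M}(B-Aq^iT_x)$ and $\prod_{i=0}^{k-2}(1-q^{-i}T_x)$ should produce exactly the telescoping that cancels the spurious poles coming from the shifts and leaves a genuine polynomial in $t$ on the left-hand side, after which one matches coefficients to pin down $P$. Since every admissible endpoint must make the upper boundary term vanish and $g(x,q/a_k)\neq 0$, the polynomial $P$ is forced to vanish at each $t=q/a_k$; this leads to $P(x,t)=\prod_{i=0}^{M-1}(B-Aq^i)\,q(1-q)x^{M+1}\prod_{k=1}^{M+3}(1-a_kt/q)$, whose constant term is $P(x,0)=\prod_{i=0}^{M-1}(B-Aq^i)\,q(1-q)x^{M+1}$.

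Granting this $q$-exactness, the conclusion is immediate. The $q$-fundamental theorem $\int_0^\tau D_{q,t}F\,d_qt=F(x,\tau)-F(x,0)$ applied to $F=Pg$ gives, using $g(x,0)=1$ and $P(x,q/a_k)=0$,
\[
\int_0^\tau (E_Mg)(x,t)\,d_qt=P(x,\tau)g(x,\tau)-P(x,0)=-\prod_{i=0}^{M-1}(B-Aq^i)\,q(1-q)x^{M+1}
\]
for every $\tau\in\{q/(Ax),q/a_2,\ldots,q/a_{M+3}\}$, which is precisely the asserted non-homogeneous equation \eqref{E_My=0}. Finally, writing $\varphi_{i,j}=\int_0^{q/a_j}-\int_0^{q/a_i}$ and applying $E_M$ termwise, the two identical boundary contributions $-P(x,0)$ cancel, whence $E_M\varphi_{i,j}=0$ and $\varphi_{i,j}$ solves the homogeneous equation $E_My=0$.

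I expect the only genuinely delicate points to be the coefficient-matching verification of the polynomial identity for $P$ — where the precise form of $E_M$ and the cancellation of the spurious poles must be used — and the bookkeeping for the $x$-dependent endpoint $q/(Ax)$; everything else is the standard telescoping mechanism for Jackson integrals.
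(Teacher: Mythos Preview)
The paper itself does not prove this result: Theorem~\ref{propequationint} is quoted from \cite{Nkaji} and only the statement is reproduced here, so there is no in-paper argument against which to compare yours.

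That said, your plan is the standard mechanism for producing Jackson-integral solutions of linear $q$-difference operators: show that the image of the integrand under the operator is $q$-exact in $t$ and then read off the boundary contributions. This is almost certainly the route taken in \cite{Nkaji} as well. Two caveats are worth recording. First, as you yourself acknowledge, the substantive step --- the verification of the polynomial identity determining $P$ --- is not carried out. Your argument for the shape of $P$ (that it must vanish at every $t=q/a_k$ so that the upper boundary term drops out) presupposes the conclusion rather than deriving it; it is a heuristic for guessing $P$, not a proof. In particular, the apparent poles of $(E_Mg)/g$ at $t=q^{-l}/(Ax)$ for $l\ge 1$ and at $t=q/(Bx)$, which are not removed by multiplying through by $\prod_k(1-a_kt)$, must cancel among the several terms of $E_M$ for your identity to be polynomial at all, and this cancellation is precisely where the specific nested-product structure of $E_M$ enters. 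Second, the handling of the $x$-dependent endpoint $\tau=q/(Ax)$ is too quick: after the substitution $t\mapsto (q/(Ax))u$ the integrand acquires new $x$-dependence through every factor, so the assertion that ``the analysis below applies verbatim'' requires its own short argument rather than an appeal to symmetry in the $a_k$.

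In short, what you have written is a correct and natural proof outline, but the computation that actually establishes the $q$-exactness --- and hence the theorem --- remains to be done.
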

	\begin{rem}\label{remparameterstrans}
		When $M=1$, the equation $E_1y=0$ is equivalent to the  equation \eqref{Int: TakemuraH3}.
		More precisely, by putting $ q^{-\nu}=B/A$, $q^{l_{i}-1/2}t_{i}=b_{i+1}/A$, $q^{h_{i}+1/2}t_{i}=a_{i+1}/B$, and by the gauge transformation $Y=x^{\nu-\alpha}y$, we have $\mathcal{H}_3Y=0$ if $E_1y=0$.
		And we have
		\begin{align}\label{valH3t}
			&\mathcal{H}_3\varphi_{i,j}^{\mathcal{H}_3}=0,\\
			&\varphi_{i,j}^{\mathcal{H}_3}=x^{\nu -\alpha } \int_{\tau_i}^{\tau_j} \frac{(q^{\nu } x t, q^{h_1+\frac{1}{2}} 
				t_1 t, q^{h_2+\frac{1}{2}} t_2 t, q^{h_3+\frac{1}{2}}
				t_3 t)_{\infty} }{(x t,q^{\nu +l_1-\frac{1}{2}} 
				t_1 t,q^{\nu +l_2-\frac{1}{2}} t_2 t,q^{\nu +l_3-\frac{1}{2}} 
				t_3 t)_{\infty} }  d_q t,\label{intH3}
		\end{align}
		where $\tau_i=q^{-h_i-1/2}/t_i$ $(i=1,2,3)$, $\tau_4=q^{1-\nu}/x$.
		Proposition \ref{propequationint} for the case of $M=1$ was obtained in \cite{AT, FN}.
	\end{rem}
%
	
	\section{Connection problem}\label{Sec con}
	In this section, we give a connection formula among the Jackson integrals of Riemann-Papperitz type \eqref{intRP}.
	Though this formula can be obtained by reducing Mimachi's formula \cite{Mi1989}, we will give independent alternative proof (see Remark \ref{thetarem} below).

	A connection formula for the Jackson integral of Jordan-Pochhammer type was obtained by Mimachi \cite{Mi1989} by calculating a certain contour integral.
	First, we review Mimachi's connection formula \cite{Mi1989} for the Jackson integral of Jordan-Pochhammer type.
	See also \cite{It,It2023,Ma,Mi1994} for the Jackson integral of Jordan-Pochhammer type and its extensions.

\begin{prop}[\cite{Mi1989}]\label{propconnMimachi}
We have
\begin{align}\label{connJP}
	{\theta(q^\rho)}\int_0^{qb_{1}} t^{\rho-1} \prod_{i=1}^{M+3}\frac{(t/b_i)_\infty}{(t/a_i)_\infty}d_q t=\sum_{k=1}^{M+3} C_k \int_0^{q/a_k} t^{\alpha-1} \prod_{i=1}^{M+3}\frac{(a_i t)_\infty}{(b_i t)_\infty} d_q t,
\end{align}
where $q^\rho=a_1\cdots a_{M+3}/(b_1\cdots b_{M+3}q^\alpha)$ and
\begin{align}
	C_k=(qb_{1})^\rho\left(\frac{a_k}{q}\right)^\alpha{\theta(q^{\rho+1}b_1/a_k)}\prod_{j=2}^{M+3} \theta\left(\frac{a_k}{b_j}\right)\prod_{\substack{1\leq j\leq M+3\\j\neq k}}\theta\left(\frac{a_k}{a_j}\right)^{-1}.
\end{align}
\end{prop}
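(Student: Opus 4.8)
This is Mimachi's connection formula, and I would reprove it by the contour-integral (residue) method indicated in \cite{Mi1989}. Write $\phi(t)=t^{\alpha-1}\prod_{i=1}^{M+3}\frac{(a_it)_\infty}{(b_it)_\infty}$ for the integrand on the right-hand side and $\psi(t)=t^{\rho-1}\prod_{i=1}^{M+3}\frac{(t/b_i)_\infty}{(t/a_i)_\infty}$ for the one on the left. The preliminary step is bookkeeping: from $(ct)_\infty=(1-ct)(cqt)_\infty$ one records the $q$-difference relation $\phi(qt)=q^{\alpha-1}\prod_{i=1}^{M+3}\frac{1-b_it}{1-a_it}\,\phi(t)$ and its analogue for $\psi$, and one notes that $\int_0^{q/a_k}\phi\,d_qt=(1-q)\sum_{n\ge0}\phi(q^{n+1}/a_k)\,q^{n+1}/a_k$ is a sum along the ray $\{q^{n+1}/a_k\}_{n\ge0}$, with the analogous statement for $\int_0^{qb_1}\psi\,d_qt$ along $\{q^{n+1}b_1\}_{n\ge0}$.

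The core of the proof is to manufacture a single-valued meromorphic function $F$ on $\mathbb{C}^\ast$ whose residues encode both sides. The idea is to replace the multivalued factor $t^{\alpha-1}$ by a theta quotient carrying the same quasi-period $q^{\alpha-1}$, so that, together with the $(a_it)_\infty$, $(b_it)_\infty$ and one extra theta factor, the resulting $F$ is single-valued and has simple poles exactly along the rays $\{q^{n+1}/a_k\}$ $(k=1,\dots,M+3)$ and $\{q^{n+1}b_1\}$. The parameter relation $q^\rho=a_1\cdots a_{M+3}/(b_1\cdots b_{M+3}q^\alpha)$ is precisely the condition that makes this replacement consistent and forces $F$ to decay on a sequence of expanding circles $|t|=R_m\to\infty$ chosen to avoid the poles. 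The only facts about theta functions needed are $\theta(qx)=-x^{-1}\theta(x)$, $\theta(q/x)=\theta(x)$, and the simplicity of the zeros of $\theta$ on $q^{\mathbb{Z}}$.

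Granting the decay, the contour integral of $F\,dt/t$ over the circles $|t|=R_m$ tends to $0$, so the sum of all residues of $F$ vanishes. Grouping the residues by rays and summing along each one — the successive residues forming a geometric-type progression summed by the quasi-periodicity of $F$ — the ray $\{q^{n+1}/a_k\}$ contributes $C_k\int_0^{q/a_k}\phi\,d_qt$ and the ray $\{q^{n+1}b_1\}$ contributes $\theta(q^\rho)\int_0^{qb_1}\psi\,d_qt$, the value of the theta quotient at the endpoint $t=q/a_k$ (resp. $t=qb_1$) supplying the prefactor. Setting the total to zero and fixing signs with $\theta(qx)=-x^{-1}\theta(x)$ yields \eqref{connJP}, and evaluating the quotient at $t=q/a_k$ produces the explicit constant $C_k$.

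The main obstacle is the construction of $F$ together with the matching of constants. One must choose the single extra theta factor so that all $M+4$ pole rays appear simultaneously with coherent quasi-periods, and then carry the prefactors through the transformation formulas to reproduce exactly the powers $(qb_1)^\rho$ and $(a_k/q)^\alpha$, the isolated factor $\theta(q^{\rho+1}b_1/a_k)$, and the theta product $\prod_{j=2}^{M+3}\theta(a_k/b_j)\prod_{j\ne k}\theta(a_k/a_j)^{-1}$. A secondary issue is convergence: the residue series and the vanishing of the contour integral hold only for parameters in a suitable domain, after which \eqref{connJP} is extended to all admissible parameters by analytic continuation.
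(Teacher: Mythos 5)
Your proposal is correct and is essentially the paper's own argument: both rest on residue calculus for a single-valued integrand built from $q$-infinite products and theta factors, with the balancing condition on $q^\rho$ guaranteeing the needed quasi-periodicity and decay, and with each pole ray contributing one Jackson integral times a theta prefactor. The only (cosmetic) difference is packaging --- the paper integrates over a fixed deformation $L$ of the unit circle, equates the residue sums inside and outside $L$ to obtain a series identity first, and then specializes the parameters $c_i$, $d_i$, $x$ to recover \eqref{connJP}, whereas you encode the Jackson integrals directly and let the total residue sum vanish over expanding circles.
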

\begin{proof}
We consider the following contour integral:
\begin{align}
	\int_L \frac{(c_1 s,\ldots,c_{M+2} s, qs/x, x / s  )_\infty}{(d_1 s,\ldots, d_{M+3} s, 1 / s)_\infty}\frac{ds}{s}.
\end{align}
Here the contour $L$ is a deformation of the positively oriented unit circle so that the poles of $1/(d_1 s,\ldots, d_{M+3} s)_\infty$ lie outside $L$, and the poles of  $1/(1 / s)_\infty$ and $0$ lie inside $L$.
By the Cauchy's residue formula to the inside and outside, we get the following formula:
\begin{align}
	\notag&{\theta(x)}\sum_{n=0}^\infty\frac{(qq^n,c_1q^n,\ldots,c_{M+2}q^n)_\infty}{(d_1 q^n,\ldots,d_{M+3}q^n)_\infty}x^n\\
	&=\sum_{k=1}^{M+3} {\theta(xd_k)}\prod_{i=1}^{M+2}\theta(c_i/d_k)\prod_{\substack{1\leq i\leq M+3\\i\neq k}}\theta(d_i/d_k)^{-1}\sum_{n=0}^\infty\frac{(qd_kq^n/d_1,\ldots,qd_kq^n/d_{M+3})_\infty}{(qd_kq^n/c_1,\ldots,qd_kq^n/c_{M+2},d_kq^n)_\infty}\left(\frac{c_1\cdots c_{M+2} q}{d_1\cdots d_{M+3}x}\right)^n.
\end{align}
Putting $c_i$, $d_i$ and $x$ suitably, we obtain the desired formula (\ref{connJP}).
\end{proof}
\begin{rem}
The formula (\ref{connJP}) is equivalent to the connection formula of the generalized $q$-hypergeometric series ${}_{M+3}\varphi_{M+2}$ \cite{Tho,Wat}.
\end{rem}
\begin{rem}
A connection formula for the Jackson integral of type $A$ was studied by Ito-Noumi \cite{IN}.
Their formula includes Mimachi's one (\ref{connJP}) as a special case.
\end{rem}
We put $\alpha=1$ and $a_1 \cdots a_{M+3}=q^2 b_1 \cdots b_{M+3}$, then $\theta(q^\rho)=\theta(q)=0$.
So we get
\begin{align}
0=\sum_{k=1}^{M+3} C_k \int_0^{q/a_k}  \prod_{i=1}^{M+3}\frac{(a_i t)_\infty}{(b_i t)_\infty} d_q t.
\end{align}
By a simple calculation, we have
\begin{align}
\sum_{k=2}^{M+3} \frac{C_k}{C_{1}} \int_{q/a_{1}}^{q/a_k} \prod_{i=1}^{M+3}\frac{(a_i t)_\infty}{(b_i t)_\infty} d_q t=-\left(1+\frac{C_2}{C_1}+\cdots+\frac{C_{M+3}}{C_1}\right)\int_{0}^{q/a_1}\prod_{i=1}^{M+3}\frac{(a_i t)_\infty}{(b_i t)_\infty} d_q t.
\end{align}
We put $a_1=A x$ and $b_1=B x$.
Due to Proposition \ref{propequationint}, the left-hand-side of the above formula satisfies the equation $E_M y=0$ because coefficients $C_k/C_1$ ($k=2,\ldots,M+3$) are pseudo constants of $x$.
On the other hand, the integral 
\begin{align}\label{intnotsol}
\int_{0}^{q/a_1}\prod_{i=1}^{M+3}\frac{(a_i t)_\infty}{(b_i t)_\infty} d_q t,
\end{align}
of the right-hand-side is not a solution for $E_M y=0$.
Thus the coefficient $1+C_2/C_1+\cdots+C_{M+3}/C_1$ vanishes.
Therefore we have the following formula:
\begin{thm}\label{thmconnint}
Suppose $a_1\cdots a_{M+3}=q^2 b_1 \cdots b_{M+3}$.
We have
\begin{align}\label{connvarM}
	\sum_{k=2}^{M+3} \tilde{C}_k \int_{q/a_1}^{q/a_k} \prod_{i=1}^{M+3}\frac{(a_i t)_\infty}{(b_i t)_\infty} d_q t=0.
\end{align}
Here 
\begin{align}
	\tilde{C}_k=\frac{C_k}{C_1}=\left(\frac{a_k}{a_1}\right)^2\prod_{i=1}^{M+3}\frac{\theta(a_k/b_i)}{\theta(a_1/b_i)}\prod_{\substack{1\leq i\leq M+3\\i\neq 1}}\theta(a_1/a_i)\prod_{\substack{1\leq i\leq M+3\\i\neq k}}\theta(a_k/a_i)^{-1}.
\end{align}
\end{thm}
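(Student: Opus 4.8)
The plan is to obtain \eqref{connvarM} as a degeneration of Mimachi's connection formula \eqref{connJP}, using the $q$-difference operator $E_M$ to eliminate an unwanted term. First I would specialize Proposition~\ref{propconnMimachi} by taking $\alpha=1$ together with the balancing condition $a_1\cdots a_{M+3}=q^2b_1\cdots b_{M+3}$. Then $q^\rho=q$, so the prefactor $\theta(q^\rho)=\theta(q)$ vanishes because $\theta(q)=(q,1)_\infty$ contains the factor $1-q^0=0$. Consequently the left-hand side of \eqref{connJP} drops out and one is left with
\begin{align}
0=\sum_{k=1}^{M+3}C_k\int_0^{q/a_k}\prod_{i=1}^{M+3}\frac{(a_it)_\infty}{(b_it)_\infty}\,d_qt.
\end{align}

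Next I would split each endpoint integral via the additivity $\int_0^{q/a_k}=\int_0^{q/a_1}+\int_{q/a_1}^{q/a_k}$ coming from the definition of the Jackson integral, noting that the $k=1$ difference integral vanishes. Dividing by $C_1$ rearranges the identity into
\begin{align}
\sum_{k=2}^{M+3}\frac{C_k}{C_1}\int_{q/a_1}^{q/a_k}\prod_{i=1}^{M+3}\frac{(a_it)_\infty}{(b_it)_\infty}\,d_qt=-\Bigl(1+\sum_{k=2}^{M+3}\frac{C_k}{C_1}\Bigr)\int_0^{q/a_1}\prod_{i=1}^{M+3}\frac{(a_it)_\infty}{(b_it)_\infty}\,d_qt.
\end{align}
Now I would put $a_1=Ax$, $b_1=Bx$ and apply $E_M$. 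On the left, each $\int_{q/a_1}^{q/a_k}=\varphi_{1,k}$ solves $E_My=0$ by Theorem~\ref{propequationint}, and since the $C_k/C_1$ are pseudo-constants in $x$ (so that $E_M(c\,\varphi_{1,k})=c\,E_M\varphi_{1,k}$) the whole left-hand side is annihilated. On the right, $\int_0^{q/a_1}$ satisfies the non-homogeneous equation \eqref{E_My=0}, whose right-hand side $-\prod_{i=0}^{M-1}(B-Aq^i)\cdot q(1-q)x^{M+1}$ is not identically zero for generic parameters. Hence $E_M$ applied to the right-hand side is the scalar $1+C_2/C_1+\cdots+C_{M+3}/C_1$ times a nonzero function, which forces that scalar to vanish. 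Feeding this back collapses the right-hand side to zero and yields \eqref{connvarM} with $\tilde{C}_k=C_k/C_1$; the stated closed form of $\tilde{C}_k$ then follows by substituting $\alpha=\rho=1$ into $C_k$ and simplifying with the theta symmetry $\theta(z)=\theta(q/z)$, the quasi-periodicity, and the balancing condition.

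The step I expect to be most delicate is verifying that, after the substitution $a_1=Ax$, $b_1=Bx$, each ratio $C_k/C_1$ really is a pseudo-constant, i.e.\ invariant under $x\mapsto qx$. This reduces to bookkeeping with $\theta(qz)=-z^{-1}\theta(z)$ across the theta factors together with the prefactor $(a_k/a_1)^2$, checking that every power of $x$ cancels; it is routine but error-prone. A minor accompanying point is to ensure the inhomogeneous term is genuinely nonzero, i.e.\ $A\neq Bq^{-i}$ for $0\le i\le M-1$, which holds for generic parameters and extends by continuation. Finally, as hinted in the opening remark of this section, the conclusion $1+\sum_{k\ge2}C_k/C_1=0$ is equivalent to the theta-function identity $\sum_{k=1}^{M+3}C_k=0$, which can be proved directly by a residue/partial-fraction expansion for theta functions; I would keep this as an independent verification that bypasses the $E_M$-argument entirely.
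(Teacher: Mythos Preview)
Your proposal is correct and follows essentially the same route as the paper: specialize Mimachi's formula at $\alpha=1$ under the balancing condition so that $\theta(q^\rho)=\theta(q)=0$, rearrange into a combination of the $\varphi_{1,k}$ plus a multiple of $\int_0^{q/a_1}$, and then use that the $\varphi_{1,k}$ solve $E_My=0$ while the remaining integral does not to force $1+\sum_{k\ge2}C_k/C_1=0$. Your added cautions about checking pseudo-constancy of $C_k/C_1$ and genericity of the inhomogeneous term, and your remark that $\sum_k C_k=0$ admits an independent theta-identity proof, match the paper's own commentary in Remark~\ref{thetarem}.
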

\begin{rem}\label{thetarem}
We obtained the linear relation (\ref{connvarM}) by reducing Mimachi's formula (\ref{connJP}).
The key point of our proof is to show the relation $1+C_2/C_1+\cdots+C_{M+3}/C_1=0$ by a simple argument using the property of the integral (\ref{intnotsol}).
{Note that the formula $1+C_2/C_1+\cdots+C_{M+3}/C_1=0$ was discussed in \cite{TM}.
See also \cite[p. 474]{WW}.
}
\end{rem}
\begin{rem}
	In \cite{Nkaji}, $\displaystyle\binom{M+3}{2}$ integral solutions $\varphi_{i,j}$ \eqref{intRP} for the rank $M+1$ equation $E_My=0$ were given (see also Proposition \ref{propequationint}).
	Using the connection formula \eqref{connvarM} and the trivial relation $\varphi_{i,j}+\varphi_{j,k}+\varphi_{k,l}$, every integral $\varphi_{i,j}$ can be written by the linear combination of $\varphi_{1,2},\varphi_{1,3},\ldots,\varphi_{1,M+2}$.
	Also the linearly independence of $\varphi_{1,2},\varphi_{1,3},\ldots,\varphi_{1,M+2}$ was proved in \cite{Nkaji}.
	Therefore the connection problem for the equation $E_My=0$ is solved by the equation \eqref{connvarM}.
\end{rem}
As a corollary of Theorem \ref{thmconnint}, we show a linear relation for  the Kajihara's $q$-hypergeometric series $W^{M,2}$ \cite{Kaji}.
The Kajihara's $q$-hypergeometric series $W^{M, N}$ is defined as follows:  
\begin{align}
	\notag&W^{M,N}\left(\begin{array}{c}
		\{a_i\}_{1\leq i\leq M}\\
		\{x_i\}_{1\leq i\leq M}
	\end{array}\bigg\lvert \ s; \{u_k\}_{1\leq k\leq N}; \{v_k\}_{1\leq k\leq N}; z\right)\\
	\notag&=\sum_{l\in(\mathbb{Z}_{\geq 0})^M} z^{|l|}\frac{\Delta(xq^l)}{\Delta(x)}\prod_{1\leq i\leq M}\frac{1-q^{|l|+l_i} s x_i/x_M}{1-s x_i/x_M}\\
	\notag&\phantom{=\sum_{l\in(\mathbb{Z}_{\geq 0})^M}}\times \prod_{1\leq j\leq M}\left(\frac{(s x_j/x_M)_{|l|}}{((s q/a_j)x_j/x_M)_{|l|}}\prod_{1\leq i\leq M}\frac{(a_j x_i/x_j)_{l_i}}{(q x_i/x_j)_{l_i}}\right)\\
	&\phantom{=\sum_{l\in(\mathbb{Z}_{\geq 0})^M}}\times \prod_{1\leq k\leq N}\left(\frac{(v_k)_{|l|}}{(s q/u_k)_{|l|}}\prod_{1\leq i\leq M}\frac{(u_k x_i/x_M)_{l_i}}{((s q/v_k)x_i/x_M)_{l_i}}\right),\label{defkaji}
\end{align}
where $|l|=l_1+\cdots +l_M$, $\displaystyle \Delta(x)=\prod_{1\leq i<j\leq M}(x_i-x_j)$, $xq^l=\{x_1q^{l_1},\ldots,x_Mq^{l_M}\}$.
For the details of $W^{M,N}$, see \cite{Kaji,kaji2018,KN}.
\begin{cor}\label{corkajilinear}
	We suppose $a_1\cdots a_{M+3}=q^2 b_1\cdots b_{M+3}$ and put
	\begin{align}
		\notag&W\left(\begin{array}{c}
			\{a_i\}_{1\leq i\leq M+3}\\
			\{b_i\}_{1\leq i\leq M+3}
		\end{array}\right)\\
		\notag&=\frac{(a_{3}/b_{1},b_{3}b_{1}q^2/(a_{2}a_{1}),b_{2}b_{1}q^2/(a_{2}a_{1}),a_{2}q/a_{1},a_{1}/a_{2})_\infty}{a_{1}( b_{3}q/a_{2},b_{2}q/a_{2},b_{1}q/a_{2},b_{3}q/a_{1},b_{2}q/a_{1},b_{1}q/a_{1})_\infty}\prod_{i=4}^{M+3}\frac{(b_i b_{1}q^2/(a_{2}a_{1}),a_i q/a_{2},a_i q/a_{1})_\infty}{(a_i b_{1}q^2/(a_{2}a_{1}),b_i q/a_{2},b_i q/a_{1})_\infty}\\
		&\times W^{M,2}\left(\begin{array}{c}
			\{a_{M+3-i}/b_{M+3-i}\}_{1\leq i\leq M}\\
			\{a_{M+3-i}\}_{1\leq i\leq M}
		\end{array}\bigg\lvert \ \frac{a_4 b_{1}q}{a_{2}a_{1}};\frac{b_{1}q}{a_{2}},\frac{b_{1}q}{a_{1}};\frac{a_4}{b_{3}},\frac{a_4}{b_{2}};\frac{a_{3}}{b_{1}}\right).\label{defW}
	\end{align}
	We have
	\begin{align}
		\label{Wrel3}&\sum_{i=2}^{M+3}D_k W\left(\begin{array}{c}
			a_1,a_{k},a_{2}, \ldots, a_{k-1}, a_{k+1}, \ldots a_{M+3}\\\{b_i\}_{1\leq i\leq M+3}
		\end{array}\right)=0,
	\end{align}
	where 
	\begin{align}
		D_k=\left(\frac{a_k}{a_{1}}\right)^2\prod_{i=1}^{M+3}\frac{\theta(a_k/b_i)}{\theta(a_{1}/b_i)}\prod_{\substack{1\leq i\leq M+3\\i\neq 1}}\theta(a_{1}/a_i)\prod_{\substack{1\leq i\leq M+3\\ i\neq k}}\theta(a_k/a_i)^{-1}.
	\end{align}
\end{cor}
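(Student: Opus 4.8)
The plan is to derive the linear relation \eqref{Wrel3} directly from Theorem \ref{thmconnint} by substituting the explicit expression of each Jackson integral $\varphi_{1,k}$ in terms of the Kajihara series $W^{M,2}$. The crucial input is the result cited from \cite{Nkaji} that the integral \eqref{intRP} equals the quantity $W$ defined in \eqref{defW}, up to the displayed product of infinite $q$-Pochhammer symbols; I would assume this evaluation as a known formula. The starting point is the connection formula \eqref{connvarM}, which reads $\sum_{k=2}^{M+3}\tilde C_k\,\varphi_{1,k}=0$ under the balancing condition $a_1\cdots a_{M+3}=q^2 b_1\cdots b_{M+3}$. Since the hypotheses of the corollary are identical, this relation is available verbatim.

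First I would fix the integral $\varphi_{1,k}=\int_{q/a_1}^{q/a_k}\prod_{i=1}^{M+3}(a_it)_\infty/(b_it)_\infty\,d_qt$ and apply the Kajihara-series evaluation from \cite{Nkaji} to rewrite it as
\begin{align}
\varphi_{1,k}=W\!\left(\begin{array}{c}a_1,a_k,a_2,\ldots,a_{k-1},a_{k+1},\ldots,a_{M+3}\\ \{b_i\}_{1\leq i\leq M+3}\end{array}\right),\notag
\end{align}
where on the right the roles of the two distinguished parameters $a_1,a_2$ in the template \eqref{defW} are played by $a_1$ and $a_k$, and the remaining $a_i$ fill the slots $a_3,\ldots,a_{M+3}$ in their natural order. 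The point is that the definition \eqref{defW} is symmetric enough in the trailing arguments that permuting the $a_i$ with $i\neq 1,k$ leaves the value unchanged, so the substitution into $\varphi_{1,k}$ is well defined. Next I would observe that the coefficient $D_k$ in the corollary is literally the coefficient $\tilde C_k$ of Theorem \ref{thmconnint}: comparing the two displays term by term shows $D_k=\tilde C_k$, the only notational difference being the index name. Substituting both identifications into \eqref{connvarM} yields \eqref{Wrel3} immediately.

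The main obstacle I anticipate is bookkeeping rather than anything conceptual: I must verify that the prefactor of infinite products in \eqref{defW}, together with the arguments $a_4b_1q/(a_2a_1)$, $b_1q/a_2$, $b_1q/a_1$, $a_4/b_3$, $a_4/b_2$, $a_3/b_1$ fed into $W^{M,2}$, transform correctly under the replacement $(a_1,a_2)\mapsto(a_1,a_k)$ that produces $\varphi_{1,k}$, and in particular that the balancing condition is preserved so that the Kajihara evaluation remains valid for every $k$. Care is needed because the distinguished parameters $a_1,a_2$ appear asymmetrically in the prefactor, so the $k$-th summand genuinely differs from the template and one must track which factors acquire $a_k$ in place of $a_2$. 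Once this substitution is checked to be consistent for each $k=2,\ldots,M+3$, the corollary follows with no further computation, since it is a formal consequence of the already-established relation $\sum_{k=2}^{M+3}\tilde C_k\varphi_{1,k}=0$.
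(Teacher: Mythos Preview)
Your approach is essentially identical to the paper's: invoke the identity from \cite{Nkaji} (equation \eqref{inttoWpart} in the paper) relating $W$ to the Jackson integral $\varphi_{1,2}$, observe that $D_k=\tilde C_k$, and substitute into \eqref{connvarM}. The only slip is that the cited identity is $W=\dfrac{-1}{q(1-q)(q)_\infty}\,\varphi_{1,2}$ rather than $\varphi_{1,k}=W$, but this overall nonzero constant is common to every term and drops out of the homogeneous relation, so your argument stands; the bookkeeping concern about $(a_1,a_2)\mapsto(a_1,a_k)$ is routine since the integrand is symmetric in $a_2,\ldots,a_{M+3}$.
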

\begin{proof}
A transformation formula between the function $W$ and the Jackson integral of Riemann-Papperitz type  \eqref{integrandJP} was given in \cite{Nkaji} as follows: 
	\begin{align}\label{inttoWpart}
		W\left(\begin{array}{c}
			\{a_i\}_{1\leq i\leq M+3}\\
			\{b_i\}_{1\leq i\leq M+3}
		\end{array}\right)=\frac{-1}{q(1-q)(q)_\infty}\int_{q/a_{1}}^{q/a_{2}}\prod_{i=1}^{M+3}\frac{(a_it)_\infty}{(b_it)_\infty}d_qt.
	\end{align}
Due to this formula and the connection formula \eqref{connvarM}, we have the desired equation \eqref{Wrel3}.
\end{proof}

We consider the case $M=1$ in the following.
As mentioned above, the equation $E_1y=0$ is equivalent to the variant of $q$-hypergeometric equation of degree three \eqref{Int: TakemuraH3}.
So the formula (\ref{connvarM}) gives a connection formula for integral solutions of  (\ref{Int: TakemuraH3}).
\begin{cor}
We put
\begin{align}
	&C_1=\frac{\theta \left(q^{h_1-l_1-\nu +1}, \frac{t_1 q^{h_1-l_2-\nu +1}}{t_2}, 
		\frac{t_1 q^{h_1-l_3-\nu +1}}{t_3}, \frac{xq^{\frac{3}{2}-h_1}}{t_1
			}, \frac{x
			q^{-h_2+\nu -\frac{1}{2}}}{t_2}, \frac{x q^{-h_3+\nu
				-\frac{1}{2}}}{t_3}\right)}{\theta \left(q^{\nu -1}, \frac{t_1
			q^{h_1-h_2}}{t_2}, \frac{t_1 q^{h_1-h_3}}{t_3}, 
			\frac{x q^{\frac{1}{2}-l_1}}{t_1}, \frac{x
			q^{\frac{1}{2}-l_2}}{t_2}, \frac{x
			q^{\frac{1}{2}-l_3}}{t_3}\right)},\\
	&C_2=C_1\bigg|_{h_1\leftrightarrow h_2,\,l_1\leftrightarrow l_2,\,t_1\leftrightarrow t_2},\ 
	C_3=C_1\bigg|_{h_1\leftrightarrow h_3,\,l_1\leftrightarrow l_3,\,t_1\leftrightarrow t_3}.
\end{align}
Then the integrals $\varphi_{1,4}^{\mathcal{H}_3}$, $\varphi_{2,4}^{\mathcal{H}_3}$, $\varphi_{3,4}^{\mathcal{H}_3}$ \eqref{intH3} satisfy the following linear relation:
\begin{align}\label{connvar3}
	C_1 \varphi_{1,4}^{\mathcal{H}_3}+C_2 \varphi_{2,4}^{\mathcal{H}_3}+C_3\varphi_{3,4}^{\mathcal{H}_3}=0.
\end{align}
\end{cor}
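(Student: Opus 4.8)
The plan is to obtain the three-term relation \eqref{connvar3} directly from the $M=1$ case of Theorem~\ref{thmconnint} by feeding in the parameter dictionary of Remark~\ref{remparameterstrans}. First I would specialize Theorem~\ref{thmconnint} to $M=1$. Writing $\varphi_{1,k}=\int_{q/a_1}^{q/a_k}\prod_{i=1}^{4}\frac{(a_it)_\infty}{(b_it)_\infty}\,d_qt$, the relation \eqref{connvarM} becomes $\tilde C_2\varphi_{1,2}+\tilde C_3\varphi_{1,3}+\tilde C_4\varphi_{1,4}=0$. Since all three integrals already share the lower endpoint $q/a_1$, there is no need to invoke the trivial cocycle relation $\varphi_{i,j}+\varphi_{j,k}=\varphi_{i,k}$ here.

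Next I would pass to the $\mathcal H_3$-picture through Remark~\ref{remparameterstrans}: $q^{-\nu}=B/A$, $a_1=Ax$, $b_1=Bx$, $a_{i+1}=Bq^{h_i+1/2}t_i$, $b_{i+1}=Aq^{l_i-1/2}t_i$ $(i=1,2,3)$, together with the gauge $Y=x^{\nu-\alpha}y$. The rescaling $t\mapsto t/B$ identifies the integrand $\prod_{i=1}^{4}(a_it)_\infty/(b_it)_\infty$ with the integrand of \eqref{intH3}, while the endpoint $q/a_1$ corresponds to the $x$-singularity $\tau_4$ and $q/a_k$ to the $t_{k-1}$-singularity $\tau_{k-1}$. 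By Remark~\ref{remparameterstrans} and \cite{AT,FN}, each $\varphi_{1,k}$ is therefore a nonzero multiple $\kappa$ of $\varphi^{\mathcal H_3}_{k-1,4}$, where $\kappa$ (the gauge factor times the Jacobian of the rescaling times the orientation sign coming from the reversal $x\to t_{k-1}$ versus $t_{k-1}\to x$) is independent of $k$.

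The substantive step is to prove the coefficient identity $\tilde C_k=-C_{k-1}$ for $k=2,3,4$. Substituting the dictionary into $\tilde C_k$, the factor $\theta(a_1/a_k)/\theta(a_k/a_1)$ collapses to the monomial $-a_1/a_k$ via the inversion $\theta(1/z)=-z^{-1}\theta(z)$, reducing the coefficient to a ratio of six thetas over six. For $k=2$ one checks that $\theta(a_2/b_2),\theta(a_2/b_3),\theta(a_2/b_4),\theta(a_1/a_3),\theta(a_1/a_4)$ and $\theta(a_1/b_2),\theta(a_1/b_3),\theta(a_1/b_4),\theta(a_2/a_3),\theta(a_2/a_4)$ reproduce, on the nose, ten of the twelve theta arguments in $C_1$; the remaining two, $\theta(a_2/b_1)=\theta(q^{h_1+1/2}t_1/x)$ and $\theta(a_1/b_1)=\theta(q^{\nu})$, differ from the targets $\theta(xq^{3/2-h_1}/t_1)$ and $\theta(q^{\nu-1})$ by a single power of $q$, so the quasi-periodicities $\theta(qz)=-z^{-1}\theta(z)$ and $\theta(q/z)=\theta(z)$ convert them at the cost of explicit monomials. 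Collecting these monomials with the prefactor $(a_2/a_1)^2$ and the $-a_1/a_2$ above, and using $a_2/a_1=q^{h_1+1/2-\nu}t_1/x$, the whole monomial contribution cancels to $-1$, giving $\tilde C_2=-C_1$; the cases $k=3,4$ follow by the index permutations $t_1\leftrightarrow t_2$ and $t_1\leftrightarrow t_3$ that define $C_2,C_3$.

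Combining the three ingredients, $\sum_{k=2}^{4}\tilde C_k\varphi_{1,k}=0$ with $\tilde C_k=-C_{k-1}$ and $\varphi_{1,k}=-\kappa\,\varphi^{\mathcal H_3}_{k-1,4}$ yields $\kappa\sum_{i=1}^{3}C_i\varphi^{\mathcal H_3}_{i,4}=0$, which is \eqref{connvar3}. I expect the main obstacle to be the theta bookkeeping of the third paragraph: one must verify that the genuinely $k$-dependent monomials produced by $(a_k/a_1)^2$ cancel \emph{exactly} against the argument shifts in the two off-by-$q$ theta factors, leaving the same constant $-1$ for every $k$, rather than a $k$-dependent factor that would spoil the homogeneity. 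The identification of the integrals $\varphi_{1,k}$ with $\varphi^{\mathcal H_3}_{k-1,4}$ is not re-proved here but is taken from Remark~\ref{remparameterstrans} and \cite{AT,FN}.
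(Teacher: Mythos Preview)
Your proposal is correct and follows essentially the same route as the paper: specialize Theorem~\ref{thmconnint} to $M=1$, then feed in the dictionary of Remark~\ref{remparameterstrans} and simplify the resulting theta coefficients. The paper condenses the theta bookkeeping into a one-line simplification of $\tilde C_2$ (using $\theta(1/z)=-z^{-1}\theta(z)$ and $\theta(q/z)=\theta(z)$) before substituting, whereas you spell out the monomial cancellation more explicitly and track the $k$-independent constant $\kappa$ relating $\varphi_{1,k}$ to $\varphi^{\mathcal H_3}_{k-1,4}$; these are the same manipulations presented at different levels of detail.
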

\begin{proof}
Due to Theorem \ref{thmconnint} we have 
\begin{align}\label{connvarM=1}
	\sum_{k=2}^{4} \tilde{C}_k \int_{q/(Ax)}^{q/a_k}\frac{(Axt)_{\infty}}{(Bxt)_{\infty}} \prod_{i=2}^{4}\frac{(a_i t)_\infty}{(b_i t)_\infty} d_q t=0.
\end{align}
Here 
\begin{align}
	\tilde{C}_2 
	&=\left(\frac{a_2}{Ax}\right)^2 \frac{\theta(a_2/Bx, a_2/b_2, a_2/b_3, a_2/b_4, Ax/a_2, Ax/a_3, Ax/a_4)}{\theta(A/B, Ax/b_2, Ax/b_3, Ax/b_4, a_2/Ax, a_2/a_3, a_2/a_4)} \nonumber\\
	&=- \frac{\theta( a_2/b_2, a_2/b_3, a_2/b_4,q^2Bx/a_2, Ax/a_3, Ax/a_4)}{\theta(q^2B/A, a_2/a_3, a_2/a_4,Ax/b_2, Ax/b_3, Ax/b_4)},\\
	\tilde{C}_3 &= \tilde{C}_2\bigg|_{a_2\leftrightarrow a_3,\,b_2\leftrightarrow b_3},\ \tilde{C}_4 = \tilde{C}_2\bigg|_{a_2\leftrightarrow a_4,\,b_2\leftrightarrow b_4}.
\end{align}
By putting $ q^{-\nu}=B/A$, $q^{l_{i}-1/2}t_{i}=b_{i+1}/A$, $q^{h_{i}+1/2}t_{i}=a_{i+1}/B$, we have the desired relation \eqref{connvar3}.
For the reason why we put parameters so, see Remark \ref{remparameterstrans}.
\end{proof}
\begin{rem}
As mentioned above, the integrals $\varphi_{i,j}^{\mathcal{H}_3}$ satisfy the trivial relation:
\begin{align}
	\varphi_{i,j}^{\mathcal{H}_3}+\varphi_{j,k}^{\mathcal{H}_3}+\varphi_{k,i}^{\mathcal{H}_3}=0.
\end{align}
Therefore we get
\begin{align}
	\begin{pmatrix}
		1&-1&0&1&0&0\\
		1&0&-1&0&1&0\\
		0&1&-1&0&0&1\\
		0&0&C_1&0&C_2&C_3
	\end{pmatrix}
	\begin{pmatrix}
		\varphi_{1,2}^{\mathcal{H}_3}\\
		\varphi_{1,3}^{\mathcal{H}_3}\\
		\varphi_{1,4}^{\mathcal{H}_3}\\
		\varphi_{2,3}^{\mathcal{H}_3}\\
		\varphi_{2,4}^{\mathcal{H}_3}\\
		\varphi_{3,4}^{\mathcal{H}_3}
	\end{pmatrix}
	=
	\begin{pmatrix}
		0\\0\\0\\0
	\end{pmatrix}.
\end{align}
The rank of this $4\times6$ matrix is $4$ in general.
\end{rem}
\begin{rem}
We obtained a three term relation (\ref{connvarM=1}) for the Jackson integral of Riemann-Papperitz type.
On the other hand, the integral can be transformed to the very-well-poised $q$-hypergeometric function ${}_8W_7$ by the Bailey's formula \cite[(2.10.18)]{GR}:
\begin{align}
	\notag&\int_{a}^{b}\frac{(qt/a,qt/b,ct,dt)_{\infty}}{(et,ft,gt,ht)_{\infty}}d_{q}t\\
	=&b(1-q)\frac{(q,bq/a,a/b,cd/(eh),cd/(fh),cd/(gh),bc,bd)_{\infty}}{(ae,af,ag,be,bf,bg,bh,bcd/h)_{\infty}}
	\times {}_{8}W_{7}\left(\frac{bcd}{hq};be,bf,bg,\frac{c}{h},\frac{d}{h};ah\right),\label{inttoser}
\end{align}
where $cd=abefgh$ and
\begin{align}
	{}_{r+1}W_{r}(a_{1}; a_{4},\ldots, a_{r+1};z)=\sum_{n=0}^{\infty}\frac{1-a_{1}q^{2n}}{1-a_{1}}\frac{(a_{1},a_{4},\ldots,a_{r+1})_{n}}{(q,qa_{1}/a_{4},\ldots,qa_{1}/a_{r+1})_{n}}z^{n}.
\end{align}
So the relation \eqref{connvarM=1} can be written in terms of ${}_8W_7$ as follows:
\begin{align}\label{w87_conec}
	\nonumber&{}_8W_7\left(a; b, c, d, e, 
	f; \frac{q^2a^2}{bcdef}\right)\\
	\nonumber&= \frac{a}{b}\frac{(q a, c, b/a, c/a)_{\infty}}{(a/b, c/b, bc/a, b c/a, q b^2/a)_{\infty}}\prod_{\eta \in \{ d, e, f \}} \frac{(q a/b \eta, q b/\eta)_{\infty}}{(q/\eta, q a/\eta)_{\infty}}
	{}_8W_7\left(\frac{b^2}{a}; b,\frac{b e}{a}, \frac{b c}{a}, \frac{b d}{a},  \frac{b f}{a}; \frac{q^2a^2}{bcdef}\right) \\
	&+ \mathrm{idem}(b,c).
\end{align} 
Here, the symbol $\mathrm{idem}(b,c)$ after an expression means that the preceding expression is repeated with $b$ and $c$ interchanged.
For the relation to known connection formula, see Appendix \ref{Appendix1}.
\end{rem}

\section{Summary}\label{Secsumdis}

In this paper, we gave a connection formula for the Jackson integral of Riemann-Papperitz type \eqref{intRP} in Theorem \ref{thmconnint}.
This formula can be obtained by reducing Mimachi's connection formula for the Jackson integral of Jordan-Pochhammer type suitably.
We solved the connection problem for the equation $E_My=0$ \eqref{defequationEM}.
When $M=1$, this equation is equivalent to the variant of $q$-hypergeometric equation of degree three \cite{HMST}. 
It follows that the connection problem for this variant is also solved.
Due to \eqref{inttoWpart}, the Jackson integral of Riemann-Papperitz type is expressed by the Kajihara's $q$-hypergeometric series $W^{M,2}$.
So we got a linear relation for $W^{M,2}$ (Corollary \ref{corkajilinear}).

There are many related problems.
\begin{itemize}
\item
The $q$-hypergeometric function ${}_8W_7$  possesses the $W(D_5)$-symmetry \cite{LV}.
Also, the symmetry, which includes $W(D_5)$, of the three term relations for ${}_8W_7$ was obtained by \cite{LV}.
These symmetries are helpful in determining whether given two  transformation formulas for ${}_8W_7$ are equivalent or not.
The integral representation \eqref{inttoWpart}  gives two and three term relations for $W^{M, 2}$ \cite[(3.17), (3.18)]{Nkaji}.
In addition, we got another linear relation \eqref{Wrel3} for $W^{M, 2}$.
It will be meaningful to clarify the symmetry of the relations for $W^{M, 2}$.

\item
We gave a connection formula for the Jackson integral of Riemann-Papperitz type \eqref{intRP}.
An elliptic extension of the integral \eqref{intRP} will be interesting, see \cite{R, S}.
The elliptic extension $E^{M, N}$ of the Kajihara's $q$-hypergeometric function $W^{M, N}$ has been studied by \cite{KN}.
It is important to derive a linear equation for the function $E^{M, N}$ and its connection formulas.
\item
The $6j$-symbols arise from the context of representations of classical or quantum groups \cite{R6j}. 
The $q$-Racah polynomials appear as the $6j$-symbols of the quantum group $U_q(\mbox{sl}(2))$ \cite{KR}.
The $q$-Racah polynomial can be represented by the function ${}_8W_7$.
The series $W^{M, 2}$ is an extension of ${}_8W_7$.
It is interesting to derive $W^{M, 2}$ from the viewpoint of representations of some quantum group.
\end{itemize}


\appendix\section{Relation to Gupta-Masson's formula}\label{Appendix1}
In the main text, we obtained a three term relation \eqref{w87_conec} for the function ${}_8W_7$ as an application of the connection formula \eqref{Int: connvarM} for the equation $E_1y=0$. 
In this appendix, we describe the relation between our formula \eqref{w87_conec} and a formula which can be derived by known method \cite{GM}.

In \cite{GM}, the following equation and its twelve solutions in term of the function ${}_8W_7$ is discussed:
\begin{align}\label{GMdiff}
&Y_{n + 1} - c_n Y_{n} + d_n Y_{n - 1}=0,
\\
\nonumber&c_n = C_n + D_n + \frac{a^2 q^{2n + 2}}{bcdeh^2}\frac{(1-b)(1-c)(1-d)(1-e)}{(1 - aq^{n + 1}/h)}, 
\\
\nonumber&d_n = C_{n - 1}D_n, 
\\
\nonumber&C_n = - \frac{(1 - aq^{n+1}/bh)(1-aq^{n+1}/ch)(1-aq^{n+1}/dh)(1-aq^{n+1}/eh)}{(1-aq^{n+1}/h)},
\\
\nonumber&D_n=-q(1-q^n/h)(1-aq^n/h(1-a^2q^{n+1}/bcdeh)).
\end{align}
The equation \eqref{GMdiff} is equivalent to the equation $E_1y=0$ \eqref{defequationEM}.
More precisely, by putting $a = a_1/B, b =q b_1/A, c = q b_2/A, d = q b_3/A, e=a_2/B, hq^{-n}=a/x$, and by the gauge transformation $Y_n = (-1)^n y(x)/(qx/b, qx/c, qx/d)_{\infty}$, the equation \eqref{GMdiff} is transformed to $E_1y=0$.
It is also claimed in \cite{GM} that the connection formulas among any three of twelve solutions can be calculated in principle by using the well-known relations \cite[(I\hspace{-.1em}I\hspace{-.1em}I.23), (I\hspace{-.1em}I\hspace{-.1em}I.37)]{GR}.
So our relation \eqref{w87_conec} should be also derived by the method in \cite{GM}.
In the following, we compare this relation with our formula  \eqref{w87_conec}.

Gupta and Masson \cite{GM} give the following equation: 
\begin{align}\label{w87_conecGM}
\nonumber&{}_8W_7\left(a; b, c, d, e, f; \frac{q^2a^2}{bcdef}\right)\\
\nonumber&=
\frac{\theta \left(b/c \right)}{\theta \left(cdef/a^2 \right)}\frac{(qa, b, qa/cd, qa/ce, qa/de, qa/df, qa/ef)_{\infty}}{(q^2/a)_{\infty}}
\prod_{\mu \in \{ b, c, d, e, f \}}\frac{(q\eta/a)_{\infty}}{(q/\eta, qa/\eta, b\eta/a)_{\infty}}\\
\nonumber&\times {}_8W_7\left(\frac{q}{a}; \frac{q}{b},  \frac{q}{c},  \frac{q}{d},  \frac{q}{e}, \frac{q}{f} \right) +
\frac{(aq, c, c/a, qb/a)_{\infty}}{(c/b, qa/b, bc/a, qb^2/a)_{\infty}}
\prod_{\mu \in \{ d, e, f \}}\frac{(qb/\eta, qa/b\eta)_{\infty}}{(q/\eta, qa/\eta)_{\infty}}\\
&
\times
\left(
1 - \frac{\theta(b, a/b, cd/a, ce/a, a/cf, bedf/a^2)}{\theta(c, a/c, bd/a, be/a, a/bf, cdef/a^2)}
\right)
{}_8W_7\left(\frac{b^2}{a}; b,\frac{b e}{a}, \frac{b c}{a}, \frac{b d}{a},  \frac{b f}{a}; \frac{q^2a^2}{bcdef}\right).
\end{align}
Subtracting the equation \eqref{w87_conecGM} from the equation in which the parameters $b$ and $c$ are interchanged, we get the following equation:
\begin{align}\label{w87 coneckiti}
&{}_8W_7\left(a; b, c, d, e, f; \frac{q^2a^2}{bcdef}\right)=C(b, c) {}_8W_7\left(\frac{b^2}{a}; b,\frac{b e}{a}, \frac{b c}{a}, \frac{b d}{a},  \frac{b f}{a}; \frac{q^2a^2}{bcdef}\right) + \mathrm{idem}(b,c), 
\end{align}
where
\begin{align}\label{coefficient connec}
&C(b, c) =\frac{D(b, c)}{E(b, c)} \frac{A(b, c) - A(c, b)}{B(b, c) - B(c, b)}, \\
\nonumber&A(x, y) = \theta \left(x, \frac{a}{x}, \frac{a}{y f}, \frac{yd}{a}, \frac{ye}{a}, \frac{xdef}{a^2} \right), \\
\nonumber&B(x, y) = \theta  \left( x, \frac{x}{a}, \frac{xdef}{a^2} \right)\prod_{\eta \in \{d, e, f \}} \theta \left( \frac{y \eta}{a} \right), \\
\nonumber&D(b, c) = \theta \left(  \frac{a}{cf} \right)  \left( \frac{c}{a}, \frac{qb}{a}, \frac{qa}{c}, \frac{qa}{bf} \right)_{\infty}^2  \left(c, \frac{a}{q}, \frac{a}{b}, \nonumber\frac{b}{c}, \frac{bf}{a}, \frac{qa}{bd}, \frac{qa}{be}, \frac{qc^2}{a}\right)_{\infty} \prod_{\eta \in \{ d, e, f \}}\left( \frac{qc}{\eta} \right)_{\infty}, \\
\nonumber&E(b, c)= \left( \frac{b}{c}, \frac{c}{b} \right)_{\infty} \prod_{\eta \in \{ b, c \} }\left( \frac{q\eta^2}{a} \right)_{\infty} \theta \left( \frac{a}{\eta} \right)\prod_{\tau \in \{ d, e, f \} }\left( \frac{q}{\tau} \right)_{\infty}\prod_{\mu \in \{ b, c, d, e, f \} }\left( \frac{qa}{\mu} \right)_{\infty}.
\end{align} 
This equation \eqref{w87 coneckiti} is the there term relation for ${}_8W_7$ which should be compared with formula \eqref{w87_conec}.
It is interesting that the coefficients of the formula  \eqref{w87_conec} are much simpler than \eqref{coefficient connec}.

\section*{Acknowledgements}
The authors would like to thank Yasuhiko Yamada for valuable suggestions, helpful discussions and constant encouragements.
This work is supported by JST SPRING, Grant Number JPMJSP2148 and JSPS KAKENHI Grant Number 22H01116.

\end{document}